\newcommand{\R}{\mathbb R}
\newtheorem{theorem}{Theorem}[section]
\newtheorem{prop}[theorem]{Proposition}
\newtheorem{lemma}[theorem]{Lemma}
\newtheorem{rem}[theorem]{Remark}
\newtheorem{example}[theorem]{Example}
\title{\LARGE \bf
Some results on second order controllability conditions
}
\author{Pierpaolo Soravia
\thanks{Dipartimento di Matematica, via Trieste 63, Universit\`a di Padova, 35121 Padova, Italy
        {\tt\small soravia@math.unipd.it}}%
}
\begin{document}

\maketitle
\thispagestyle{empty}
\pagestyle{empty}

\begin{abstract}

For a symmetric system, we want to study the problem of crossing an hypersurface in the neighborhood of a given point, when we suppose that all of the available vector fields are tangent to the hypersurface at the point. Classically one requires transversality of at least one Lie bracket generated by two available vector fields. However such condition does not take into account neither the geometry of the hypersurface nor the practical fact that in order to realize the direction of a Lie bracket one needs three switches among the vector fields in a short time. We find a new sufficient condition that requires a symmetric matrix to have a negative eigenvalue. This sufficient condition, which contains either the case of a transversal Lie bracket and the case of a favorable geometry of the hypersurface, is thus weaker than the classical one and easy to check. Moreover it is constructive since it provides the controls for the vector fields to be used and produces a trajectory with at most one switch to reach the goal.
\end{abstract}

\section{Introduction}

We consider the following controlled dynamical system
\begin{equation}\label{eqsys}
\left\{\begin{array}{ll}\dot x_t=\sigma(x_t)a_t,\\
x_0\in {\mathbb R}^n,
\end{array}\right.
\end{equation}
where $\sigma:\R^n\to\R^{n\times m}$ is locally Lipschitz continuous and $a.:[0,+\infty)\to B_1(0)$, $B_1(0)=\{a\in \R^m:|a|\leq1\}$, will always be piecewise continuous. For convenience we will indicate as $\sigma_i:\R^n\to\R^n$, $i=1,\dots,m$ the vector fields provided by the columns of the matrix valued $\sigma$. We will assume  at least $\sigma\in C^1(\R^n;\R^m)$.
Such a system is said to be symmetric.

We are also given a function $u:\R^n\to\R$, $u\in C^2(\R^n)$ and a point $\bar x\neq0$, $\nabla u(\bar x)\neq 0$.  The idea is that $u$ describes locally around $\bar x$ the boundary of a target set for (\ref{eqsys}), ${\bf T}=\{x:u(x)\leq u(\bar x)\}$.
The problem that we want to investigate is the following: find appropriate conditions on $\sigma$ so that for all positive and small times $t>0$ there is a piecewise constant control $a_.$ and some $\delta>0$ such that the corresponting trajectory of (\ref{eqsys}) starting out at every point $x_0$, $|x_0-\bar x|\leq\delta$, will satilsfy
$$u(x_t)\leq u(\bar x)-\delta.$$
This implies that the target ${\bf T}$ is small time locally attainable (STLA) by system (\ref{eqsys}) at $\bar x$.
In other words, given any small time $t>0$, the point $\bar x$ is in the interior of the set of points from which we can reach $\bf T$ in time less than $t$. Here function $u$ may for instance be the signed distance function from the manifold $M=\{x:u=u(\bar x)\}$ but this is not required.
It is well known, see the book \cite{bcd}, that (\ref{eqsys}) being STLA is equivalent to the continuity of the minimum time function at $\bar x$. Moreover, when (\ref{eqsys}) is STLA at every point in the boundary of the target, then the minimum time function is continuous in its whole domain. This fact makes it possible to characterise the minimum time function as the unique solution of a free boundary problem for the Hamilton-Jacobi equation, see Bardi and the author \cite{baso2}.

Sufficient conditions for system (\ref{eqsys}) to be STLA are given on the vector fields $\sigma_i$ and have different nature. Classical first order attainability conditions require that at least one of the vector fields is transversal to $M$, namely $\nabla u\cdot \sigma_i(\bar x)\neq0$ for some $i=1,\dots,m$ (Petrov condition). If this sufficient condition fails, namely all vector fields $\sigma_i(\bar x)$ are in the tangent space of $M$ at $\bar x$, one can give second order conditions. Classically these require that a Lie bracket between two vector fields of the system is transversal to $M$ at $\bar x$. There are at least two unpleasant facts in such a sufficient condition. The first one is that, as we see in Example 1 below, sometimes we can cross the manifold $M$ with only one vector field if the geometry of the manifold (in particular its curvature) is favorable as compared to the trajectory and still have a second order STLA. The second and more important for applications is that in order to construct a trajectory that follows (with an error) the new vector field provided by a Lie bracket, one needs to build a trajectory
that has three switches, and they need to happen in short time, if we want to keep the error small. Indeed in order for the system to follow the vector field $[\sigma_i,\sigma_j]$, which denotes the Lie bracket between the two vector fields, for small time $t>0$ we need to use the control
$$a_s=\left\{
\begin{array}{ll}
e_i,\quad& s\in[0,t)   \\
e_j,\quad& s\in[t,2t)   \\  
-e_i,\quad& s\in[2t,3t)   \\  
-e_j,\quad& s\in[3t,4t),
\end{array}
\right.$$
where $e_i,e_j$ are the usual elements of the canonical basis in $\R^m$. Time needs to be short because we still get an error of size $t^3$ when $\sigma \in C^2$. 

In this paper we derive explicit conditions on the vector fields of the system that imply appropriate estimates on the trajectories. Such estimates ensure STLA at $\bar x$ and then with a standard mechanism, continuity and then local 1/2-H\"older regularity of the minimum time function, see e.g. \cite{so}, or Theorem IV.1.18 in (\cite{bcd}). 
To do this  we consider trajectories of (\ref{eqsys}) with only one switch between two (at most) vector fields. When the boundary of the target is locally a smooth manifold of codimension 1, this family of trajectories is enough to provide second order local attainability at one point. Our conditions can be easily described by checking if a suitable positive semidefinite matrix has a negative eigenvalue, and they contain either the case of a transversal Lie bracket and the case of a single vector field with {\it good} curvature. Therefore our sufficient condition is more general than the classical one. Moreover, the corresponding eigenvector contains the coordinates of the two controls that we can use to define a trajectory crossing the hypersurface. Our method is therefore constructive in that finding an eigenvector with negative eigenvalue will deliver what controls to use, and optimality, since the least eigenvector corresponds to $u$ decreasing with the highest rate. The main difference in our approach from others in the literature is that instead of looking at sufficient conditions for second order controllability as $\nabla u\cdot [\sigma_i,\sigma_j](x)\neq0$ for some $i,j$, which is a first order operator on the function $u$, we find it natural to look at properties of the second order hamiltonians $\nabla(\nabla u\cdot\sigma_i)\cdot\sigma_j$ which are second order operators in $u$. As we state below, our approach can then be rephrased as some second order, fully nonlinear elliptic partial differential equation having a smooth strict supersolution making it a counterpart for second order conditions of the Hamilton-Jacobi equation in the case of Petrov condition. Our work can have consequences on the way one can construct controls that steer even globally a system to a target, in finite time or asymptotically, but we will not discuss it here.
Higher order attainability conditions are also possible, but we will not study them in this paper.

Small time local attainability and regularity of the minimum time function is a long studied and important subject in optimal control.
Besides classical results by Kalman (for linear systems) and by Sussman, who mainly deal with controllability at equilibrium points of the system, we recall Petrov \cite{pe,pe2} for the study of first order controllability, that is attainability at a single point. Liverowskii \cite{li} studied the corresponding problem of second order, see also Bianchini and Stefani \cite{bs,bs2}. Controllability of higher order to a point was studied by Liverowskii \cite{li2}. For attainability of a target different from a point we recall the papers by Bacciotti \cite{ba} in the case of targets of codimension 1 and the author \cite{so} for manifolds of any dimension and possibly with a boundary.

More recently the work by Krastanov and Quincampoix \cite{kr,kr2} pointed out the importance of the geometry of the target and studied higher order attainability of nonsmooth targets for affine systems with nontrivial drift.  For the same class of systems Marigonda, Rigo and Le \cite{ma,ma2,ma3} studied higher order regularity focusing on the lack of smoothness of the target and the presence of state constraints. We finally mention the paper by Motta and Rampazzo \cite{mr} where the authors study higher order hamiltonians obtained by adding iterated Lie brackets as additional vector fields, in order to prove global asymptotic controllability to a target.

As a general notation, in the following we indicate the scalar product as
$$a\cdot b,\quad a,b\in\R^n,$$
and as ${}^tA$ the transpose of a matrix $A$. If $f:\R^n\to\R^n$ is a smooth vector field, we will denote $Df(x)=\left(\partial_{x_j}f_i\right)_{i,j=1\dots,n}$ the jacobian matrix of $f$ at $x$. As a general rule, in the product of functions having the same dependance, we will show their argument only after the last factor. Everything we develop will be in an appropriate neighborhood of a given point, but we keep all functions everywhere defined for convenience.

\section{Preliminaries}

In this section we recover the main estimates that we are going to use.

\subsection{Estimates of trajectories}

We first analyze the trajectory resulting from a unique switch between two smooth (at least of class $C^1$) vector fields.
For $f,g:\R^n\to \R^n$ vector fields of class $C^1$, we will use below the notation of Lie bracket as the vector field
$$[f,g]:\R^n\to\R^n,\quad [f,g](x)=Dg\;f(x)-Df\;g(x).$$

\begin{prop}\label{prop1}
Let $t>0$ and $f,g:\R^n\to\R^n$ be $C^1$ vector fields. 
Consider the solution of
\begin{equation}\label{eqswitch}
\dot x_s=\left\{\begin{array}{ll}f(x_s),\quad&\mbox{ if }s\in[0,t),\\
g(x_s),&\mbox{ if }s\in(t,2t],\\ x_0\in\R^n.\end{array}\right.
\end{equation}
Then, as $t\to0+$,
$$\begin{array}{l}
x_{2t}-x_0\\
=(f(x_0)+g(x_0))t+D(f+g)\;(f+g)(x_{0})\frac{t^2}2\\
+[f,g](x_0)\frac{t^2}2+o(t^2).
\end{array}$$
\end{prop}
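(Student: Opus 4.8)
The plan is to expand the trajectory of (\ref{eqswitch}) by Taylor's formula separately on the two intervals $[0,t]$ and $[t,2t]$ and then compose the two expansions. On $[0,t]$ the solution solves $\dot x_s=f(x_s)$, and since $f\in C^1$ it is of class $C^2$ with $\ddot x_s=Df(x_s)\,f(x_s)$, a continuous function of $s$; hence Taylor's theorem with Peano remainder yields, as $t\to 0+$,
$$x_t=x_0+f(x_0)\,t+Df(x_0)f(x_0)\,\frac{t^2}2+o(t^2),$$
and in particular $x_t-x_0=f(x_0)\,t+o(t)$.

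Next I would repeat the argument on $[t,2t]$ with $g$ in place of $f$ and $x_t$ as initial point: writing $\tau=s-t$, the curve $\tau\mapsto x_{t+\tau}$ solves $\dot y=g(y)$, $y(0)=x_t$, so by the same reasoning
$$x_{2t}=x_t+g(x_t)\,t+Dg(x_t)g(x_t)\,\frac{t^2}2+o(t^2),$$
the remainder being genuinely $o(t^2)$ since $x_t$ stays in a fixed neighbourhood of $x_0$ as $t\to0+$, on which $g$ and $Dg$ are continuous. It then remains to re-express the terms through $x_0$: by continuity of $g$ and $Dg$ together with $x_t-x_0=f(x_0)t+o(t)$ one gets $g(x_t)\,t=g(x_0)\,t+Dg(x_0)f(x_0)\,t^2+o(t^2)$ and $Dg(x_t)g(x_t)\,\frac{t^2}2=Dg(x_0)g(x_0)\,\frac{t^2}2+o(t^2)$, noting that the quadratic part of $x_t-x_0$ affects $g(x_t)\,t$ only at order $t^3$. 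Substituting into the previous display gives
$$x_{2t}-x_0=(f(x_0)+g(x_0))\,t+\Big(\tfrac12 Df\,f+\tfrac12 Dg\,g+Dg\,f\Big)(x_0)\,t^2+o(t^2).$$

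To finish I would use the pointwise algebraic identity
$$\tfrac12 Df\,f+\tfrac12 Dg\,g+Dg\,f=\tfrac12\,D(f+g)\,(f+g)+\tfrac12\,[f,g],$$
immediate from $D(f+g)(f+g)=Df\,f+Df\,g+Dg\,f+Dg\,g$ and $[f,g]=Dg\,f-Df\,g$; plugging it into the line above produces exactly the asserted expansion. The computation is essentially routine, and the only point I would take care to spell out is the bookkeeping of the error terms under the mere $C^1$ assumption: one must invoke the Peano form of Taylor's theorem (no Lipschitz bound on $Df$ is available, so an $O(t^2)$ remainder is not automatic, but continuity of $\ddot x_s$ does give $o(t^2)$), and one must verify that inserting the first-order expansion of $x_t$ into the $C^1$ map $g$ contributes the cross term $Dg(x_0)f(x_0)\,t^2$ and nothing else at order $t^2$. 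That verification is the main, though modest, obstacle.
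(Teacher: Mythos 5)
Your proof is correct and follows essentially the same route as the paper: both split the increment at the switching time $x_t$, expand each leg to second order so that the cross term $Dg(x_0)f(x_0)\,t^2$ appears from evaluating $g$ at $x_t$ rather than $x_0$, and then regroup $\tfrac12 Df\,f+\tfrac12 Dg\,g+Dg\,f$ into $\tfrac12 D(f+g)(f+g)+\tfrac12[f,g]$. The only cosmetic difference is that the paper carries out the expansion via iterated integral representations of the increments, whereas you use Taylor's theorem with Peano remainder on each subinterval; your care about uniformity of the $o(t^2)$ remainder on the second leg (where the initial point $x_t$ moves with $t$) is a point the paper glosses over, and it is handled correctly.
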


\begin{proof} Observe that
$$\begin{array}{ll}
x_{2t}-x_0=x_{2t}-x_t+x_t-x_0\\
=\int_0^t(g(x_{t+s})+f(x_s))\;ds\\
=\int_0^t[(g(x_{t+s})-g(x_t))+(f(x_s)-f(x_0)]\;ds\\
+g(x_t)t+f(x_0)t\\
=\int_0^t[\int_0^s(Dg\;g(x_{t+r})+Df\;f(x_r))\;dr]\;ds\\
+\int_0^t(g(x_s)+sDg\;f(x_s))\;ds+f(x_0)t\\
=\int_0^t[\int_0^s(Dg\;g(x_{t+r})+Df\;f(x_r)+Dg\;f(x_r))\;dr]\;ds\\
+\int_0^tsDg\;f(x_s)\;ds+(f(x_0)+g(x_0))t\\
=(Dg\;g(x_{0})+Df\;f(x_0)+2Dg\;f(x_0))\frac{t^2}2\\
+(f(x_0)+g(x_0))t+o(t^2).
\end{array}
$$
\end{proof}

\begin{rem} Note that standard Taylor estimate shows that if we consider the averaged system
$$\begin{array}{ll}
\dot y_s=\frac{f(y_s)+g(y_s)}2,\quad&\mbox{ for }s\in[0,2t],\\
y_0=x_0,
\end{array}$$
which is a feasible trajectory of (\ref{eqsys}) by convexity of the control set, then it satisfies
$$\begin{array}{l}y_{2t}-x_0\\
=(f(x_0)+g(x_0))t+D(f+g)\;(f+g)(x_{0})\frac{t^2}2
+o(t^2).
\end{array}$$
Therefore from the Proposition we conclude that
$$x_{2t}=y_{2t}+[f,g](x_0)\frac{t^2}2+o(t^2).$$
Thus in (\ref{eqsys}) one switch between two admissible vector fields causes a deflection from the admissible trajectory provided by the average of the vector fields by a second order term proportional to their Lie bracket.
This is a simplified version of the well known Baker-Campbell-Hausdorff formula stopped at the second order.
In the above statements all remainders $o(t^2)$ become terms of the order of $t^3$ if the vector fields $f,g\in C^2$.
\end{rem}

\subsection{Second order Hamiltonians}

We now consider a $C^2$ function $u:\R^n\to\R$, $C^1$ vector fields $f,g:\R^n\to\R^n$ and define the second order hamiltonian
$$H_{f,g}(x)=\nabla(\nabla u\cdot f)\cdot g(x)=D^2u\;f\cdot g(x)+\nabla u\cdot Df\;g(x),$$
which corresponds to the standard notation for the usual hamiltonian $H_f(x)=-\nabla u\cdot f(x)$.

By standard Taylor expansions of functions in multiple variables we obtain second order estimates of the variation of functions along trajectories of (\ref{eqswitch}) as a consequence of Proposition \ref{prop1}.
\begin{prop}\label{prop2}
Let $t>0$ and $f,g:\R^n\to\R^n$ be $C^1$ vector fields. Let $u:\R^n\to\R$ be a function of class $C^2$. For the trajectory (\ref{eqswitch}) we have the following estimate
\begin{equation}\label{eqest}\begin{array}{ll}
u(x_{2t})-u(x_0)=\nabla u\cdot(f+g)(x_0)t\\+(H_{{f+g},{f+g}}(x_0)+\nabla u\cdot[f,g](x_0))\frac{t^2}2+o(t^2)\\
=\nabla u\cdot(f+g)(x_0)t\\
+(H_{f,f}(x_0)+H_{g,g}(x_0)+2H_{g,f}(x_0))\frac{t^2}2+o(t^2).
\end{array}\end{equation}
If in particular $f\equiv g$ then
$$u(x_{2t})-u(x_0)=2\nabla u\cdot f(x_0)t\\
+2H_{f,f}(x_0){t^2}+o(t^2).$$
\end{prop}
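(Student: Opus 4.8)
The plan is to combine Proposition~\ref{prop1} with a second order Taylor expansion of $u$ about $x_0$. Set $\Delta=x_{2t}-x_0$. By Proposition~\ref{prop1},
$$\Delta=(f+g)(x_0)\,t+\Big(D(f+g)\,(f+g)(x_0)+[f,g](x_0)\Big)\frac{t^2}{2}+o(t^2),$$
so in particular $|\Delta|=O(t)$. Since $u\in C^2$, Taylor's formula at $x_0$ gives
$$u(x_{2t})-u(x_0)=\nabla u(x_0)\cdot\Delta+\frac12\,D^2u(x_0)\Delta\cdot\Delta+o(|\Delta|^2),$$
and $o(|\Delta|^2)=o(t^2)$ because $|\Delta|=O(t)$.

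Next I would substitute the expansion of $\Delta$ into the two terms separately. In the linear term,
$$\nabla u(x_0)\cdot\Delta=\nabla u\cdot(f+g)(x_0)\,t+\Big(\nabla u\cdot D(f+g)\,(f+g)(x_0)+\nabla u\cdot[f,g](x_0)\Big)\frac{t^2}{2}+o(t^2).$$
In the quadratic term only the leading part $(f+g)(x_0)\,t$ of $\Delta$ contributes at order $t^2$, hence
$$\frac12\,D^2u(x_0)\Delta\cdot\Delta=\frac{t^2}{2}\,D^2u\,(f+g)\cdot(f+g)(x_0)+o(t^2).$$
Adding the two and using the definition $H_{p,q}(x)=D^2u\,p\cdot q(x)+\nabla u\cdot Dp\,q(x)$ with $p=q=f+g$, the coefficient of $t^2/2$ is exactly $H_{f+g,f+g}(x_0)+\nabla u\cdot[f,g](x_0)$, which is the first identity in (\ref{eqest}).

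For the second form I would observe that $(p,q)\mapsto H_{p,q}$ is bilinear — immediate from its definition, since $D^2u$ is bilinear and $p\mapsto Dp$ is linear — so $H_{f+g,f+g}=H_{f,f}+H_{g,g}+H_{f,g}+H_{g,f}$. Moreover, the symmetry of $D^2u$ gives $D^2u\,f\cdot g(x_0)=D^2u\,g\cdot f(x_0)$, whence $H_{f,g}-H_{g,f}=\nabla u\cdot Df\,g(x_0)-\nabla u\cdot Dg\,f(x_0)=-\nabla u\cdot[f,g](x_0)$, i.e. $\nabla u\cdot[f,g](x_0)=H_{g,f}(x_0)-H_{f,g}(x_0)$. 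Substituting, $H_{f+g,f+g}+\nabla u\cdot[f,g]=H_{f,f}+H_{g,g}+2H_{g,f}$, which is the second identity. The case $f\equiv g$ then follows either by setting $g=f$ (so $[f,g]=0$ and $H_{2f,2f}=4H_{f,f}$) or directly from the ordinary Taylor expansion of $s\mapsto u(x_s)$ along $\dot x_s=f(x_s)$ on $[0,2t]$.

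The argument is essentially bookkeeping; the only points that need attention are verifying that the Taylor remainder $o(|\Delta|^2)$ is genuinely $o(t^2)$ (which is where $|\Delta|=O(t)$ from Proposition~\ref{prop1} enters) and keeping the algebra of the bilinear form $H$ and the Lie bracket consistent, in particular the sign in $H_{f,g}-H_{g,f}=-\nabla u\cdot[f,g]$.
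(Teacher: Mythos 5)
Your proof is correct and follows essentially the same route as the paper: apply Proposition~\ref{prop1}, Taylor-expand $u$ to second order at $x_0$ (noting $|x_{2t}-x_0|=O(t)$ so the remainder is $o(t^2)$), and then pass to the second form via the bilinearity of $H$ and the identity $\nabla u\cdot[f,g]=H_{g,f}-H_{f,g}$. The sign bookkeeping with the convention $[f,g]=Dg\,f-Df\,g$ and the $f\equiv g$ specialization both check out.
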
 
\begin{proof}
From the standard Taylor estimate and Proposition \ref{prop1}
$$\begin{array}{ll}
u(x_{2t})-u(x_0)=\nabla u\cdot(f+g)(x_0)t\\
+\nabla u\cdot(D(f+g)\;(f+g)(x_0)+[f,g](x_0))\frac{t^2}2\\
+\frac12D^2u(f+g)\cdot(f+g)t^2+o(t^2)
\end{array}$$
from which the first equality follows. The second equality is a consequence of the simple observation that
$$\nabla u\cdot[f,g](x_0)=H_{g,f}(x_0)-H_{f,g}(x_0),$$
when $u\in C^2$.
\end{proof}
\begin{rem}
In the above, if the vector fields $f,g$ are at least of class $C^2$ and the function $u$ is at least of class $C^3$, then the remainders in Proposition \ref{prop2} are of the order $t^3$. Notice that in (\ref{eqest}) we obtain the variation of $u(x_\cdot)$ at the point $2t$ after a complete switch and not at every point of the interval $[0,2t]$.
\end{rem}

\section{Symmetric systems}

In this section we apply the previous one to symmetric systems, in particular for trajectories starting out at points where the vector fields are tangent. We then specialise at such points the principal part of (\ref{eqest}). To this end we first need to rewrite the second order hamiltonians.
\begin{lemma}\label{lemtech} Let $\sigma:\R^n\to\R^{n\times m}$ be of class $C^1$ whose columns are vector fields denoted by $\sigma_i$, $i=1,\dots,m$ and $u:\R^n\to\R$ be of class $C^2$. Let $a_1,a_2\in B_1(0)$ and $f=\sigma a_1,g=\sigma a_2$. Then:

\noindent
(i)  we can rewrite the second order hamiltonian as
$$H_{f,g}(x)=S(x)\;a_1\cdot a_2,\quad a_1,a_2\in B_1(0),
$$
where $S(x)={}^t\sigma(x)\;{}^tD(\nabla u\;\sigma)(x)$ and $S:\R^n\to \R^{m\times m}$ is continuous.

\noindent
(ii) We can express the product with the Lie bracket
$$[f,g]\cdot\nabla u(x)=2S^e(x)a_2\cdot a_1,
$$
where $S^e$ denotes the emisymmetric part of $S$.
In particular
$$S^e(x)=\left(\frac12[\sigma_j,\sigma_i]\cdot \nabla u(x)\right)_{i,j=1,\dots,m}$$
and $S$ is not symmetric at $x$ if and only if there is a Lie bracket among the vector fields $\sigma_i(x)$, $i=1,\dots,m$, which is not orthogonal to $\nabla u(x)$.

\noindent
(iii) The symmetric part of $S$ is 
$$\begin{array}{ll}
S^*(x)={}^t\sigma\;D^2u\;\sigma(x)\\
+\left(\frac12(D\sigma_j\;\sigma_i+D\sigma_i\;\sigma_j)\cdot\nabla u(x)\right)_{i,j=1,\dots,m}\end{array}$$
\end{lemma}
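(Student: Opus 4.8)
The three claims are all algebraic manipulations resting on one observation: the second order hamiltonian $H_{f,g}$ is \emph{bilinear} in the pair $(f,g)$ as soon as $f$ and $g$ are combinations of the columns $\sigma_1,\dots,\sigma_m$ with constant coefficients. To prove (i) I would write $f=\sigma a_1=\sum_j a_{1,j}\sigma_j$ and $g=\sigma a_2=\sum_i a_{2,i}\sigma_i$; since $\nabla u\cdot f$, and hence $\nabla(\nabla u\cdot f)$, depends linearly on $a_1$ (the $\sigma_j$ being fixed) and $\nabla(\nabla u\cdot f)\cdot g$ depends linearly on $a_2$, one gets
$$H_{f,g}=\sum_{i,j}a_{1,j}a_{2,i}\,H_{\sigma_j,\sigma_i}.$$
Then I would check that $H_{\sigma_j,\sigma_i}(x)=\sigma_i(x)\cdot\nabla(\nabla u\cdot\sigma_j)(x)$ is precisely the $(i,j)$ entry of ${}^t\sigma(x)\,{}^tD(\nabla u\,\sigma)(x)$: the $j$-th component of the $\R^m$-valued map $x\mapsto\nabla u(x)\cdot\sigma(x)$ is $\nabla u\cdot\sigma_j$, so the $j$-th row of its Jacobian $D(\nabla u\,\sigma)$ is $\bigl(\partial_{x_k}(\nabla u\cdot\sigma_j)\bigr)_k$, and pairing it with the $i$-th row $\sigma_i$ of ${}^t\sigma$ produces exactly $\sigma_i\cdot\nabla(\nabla u\cdot\sigma_j)$. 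Hence $H_{f,g}=S a_1\cdot a_2$ with $S={}^t\sigma\,{}^tD(\nabla u\,\sigma)$; continuity of $S$ is clear because its entries involve only $D\sigma$ (continuous since $\sigma\in C^1$) and $D^2u$ (continuous since $u\in C^2$).

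For (ii) the starting point is the identity $\nabla u\cdot[f,g]=H_{g,f}-H_{f,g}$, valid because $D^2u$ is symmetric and already recorded in the proof of Proposition~\ref{prop2}. By part (i) the right-hand side equals $S a_2\cdot a_1-S a_1\cdot a_2$, and since $S a_1\cdot a_2={}^t a_2\,S a_1={}^t a_1\,{}^tS\,a_2={}^tS a_2\cdot a_1$ this is $(S-{}^tS)a_2\cdot a_1=2S^e a_2\cdot a_1$, where $S^e=\tfrac12(S-{}^tS)$ is the emisymmetric part. Taking $a_1,a_2$ equal to canonical basis vectors then reads off the entries of $S^e$ as the brackets $\tfrac12[\sigma_j,\sigma_i]\cdot\nabla u$ appearing in the statement. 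The last equivalence is immediate: $S(x)$ is symmetric iff $S^e(x)=0$ iff every entry $\tfrac12[\sigma_i,\sigma_j](x)\cdot\nabla u(x)$ vanishes, i.e.\ iff no Lie bracket of the $\sigma_i(x)$ has a component along $\nabla u(x)$.

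Part (iii) is then just the splitting $S=S^*+S^e$ with $S^*=\tfrac12(S+{}^tS)$. Using the definition $H_{\sigma_j,\sigma_i}=D^2u\,\sigma_j\cdot\sigma_i+\nabla u\cdot D\sigma_j\,\sigma_i$, the $(i,j)$ entry of $S^*$ is $\tfrac12(H_{\sigma_j,\sigma_i}+H_{\sigma_i,\sigma_j})$; the two Hessian terms coincide by symmetry of $D^2u$ and combine into $D^2u\,\sigma_i\cdot\sigma_j=({}^t\sigma\,D^2u\,\sigma)_{ij}$, while the two first-order terms give $\tfrac12(D\sigma_j\,\sigma_i+D\sigma_i\,\sigma_j)\cdot\nabla u$; this is exactly the asserted formula.

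I expect the only real source of difficulty to be keeping the sign and transpose bookkeeping straight: which of $a_1,a_2$ multiplies $S$ on which side, and the order of the fields inside the Lie bracket, since the emisymmetric part of $S$ is generated precisely by the $\nabla u\cdot[f,g]$ term and a flipped convention there reverses the sign of every entry of $S^e$. Beyond that, nothing enters except the bilinearity noted above, the Taylor-type identity recorded in Proposition~\ref{prop2}, and symmetry of the Hessian $D^2u$.
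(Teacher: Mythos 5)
Your proof is correct and follows essentially the same route as the paper's: the same direct computation for (i) (done entry-wise via bilinearity rather than in one matrix identity, which is immaterial), the same identity $\nabla u\cdot[f,g]=H_{g,f}-H_{f,g}$ for (ii), and the same symmetrization of $S={}^t\sigma\,D^2u\,\sigma+(D\sigma_j\,\sigma_i\cdot\nabla u)_{i,j}$ for (iii). One small remark on the sign bookkeeping you yourself flagged: with the convention $[f,g]=Dg\,f-Df\,g$, the computation in (ii) actually gives $(S^e)_{ij}=\tfrac12[\sigma_i,\sigma_j]\cdot\nabla u$ rather than $\tfrac12[\sigma_j,\sigma_i]\cdot\nabla u$ --- a sign slip that is present in the paper's own statement as well and is harmless for the symmetry criterion.
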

\begin{proof}
(i) It is just a simple computation
$$\begin{array}{ll}
H_{f,g}(x)=\nabla(\nabla u\; \sigma a_1)(x)\cdot \sigma(x) a_2=\\
{}^tD(\nabla u\;\sigma)(x)a_1\cdot\sigma(x) a_2={}^t\sigma(x)\;{}^tD(\nabla u\;\sigma)(x)\;a_1\cdot a_2.
\end{array}$$

\noindent
(ii) Again we compute
$$\begin{array}{ll}
[f,g]\cdot\nabla u(x)=H_{g,f}(x)-H_{f,g}(x)\\
=S(x)\;a_2\cdot a_1-S(x)\;a_1\cdot a_2=(S(x)-{}^tS(x))\;a_2\cdot a_1\\
=2S^e(x)a_2\cdot a_1.
\end{array}$$

\noindent
(iii) As easily seen in coordinates
$$\begin{array}{ll}
S(x)={}^t\sigma{}^tD(\nabla u \;\sigma(x))={}^t\sigma\;D^2u\;\sigma(x)\\
+(D\sigma_j\;\sigma_i\cdot\nabla u(x))_{i,j=1,\dots,m},
\end{array}$$
from which the symmetric part follows.
\end{proof}
We can rewrite the second order term in (\ref{eqest}) in two ways, that are convenient in different ways. Below we denote the matrix valued function $K:\R^n\to\R^{2m\times 2m}$,
\begin{equation}\label{eqkdef}
K(x)=\left(\begin{array}{cc}S^*(x)\quad&{}^tS(x)\\S(x)&S^*(x)
\end{array}\right).
\end{equation}
Notice that $K(x)$ is symmetric for any $x\in \R^n$. The proof of the following result is straightforward.
\begin{lemma}\label{lemtech2}
Let $\sigma:\R^n\to\R^{n\times m}$ be of class $C^1$ and $u:\R^n\to\R$ be of class $C^2$. Let $a_1,a_2\in B_1(0)$, then
$$\begin{array}{ll}
K(x)\left(\begin{array}{c}a_1\\a_2\end{array}\right)\cdot \left(\begin{array}{c}a_1\\a_2\end{array}\right)\\
=S(x)a_1\cdot a_1+S(x)a_2\cdot a_2+2S(x)a_1\cdot a_2\\
=S^*(x)(a_1+a_2)\cdot(a_1+a_2)+2S^e(x)a_1\cdot a_2.
\end{array}$$
\end{lemma}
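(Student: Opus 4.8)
The plan is to expand the left-hand side using the block structure of $K(x)$ and then recognize the resulting scalar quantities via the identities from Lemma \ref{lemtech}. First I would write out the block matrix product explicitly: since
$$K(x)\begin{pmatrix}a_1\\a_2\end{pmatrix}=\begin{pmatrix}S^*(x)a_1+{}^tS(x)a_2\\ S(x)a_1+S^*(x)a_2\end{pmatrix},$$
taking the scalar product with $(a_1,a_2)$ gives
$$S^*(x)a_1\cdot a_1+{}^tS(x)a_2\cdot a_1+S(x)a_1\cdot a_2+S^*(x)a_2\cdot a_2.$$
Next I would use that $S^*$ is symmetric so $S^*a_i\cdot a_i=S a_i\cdot a_i$ (the emisymmetric part contributes nothing to a diagonal quadratic form, since $S^e a\cdot a=0$), and that ${}^tS(x)a_2\cdot a_1=a_2\cdot S(x)a_1=S(x)a_1\cdot a_2$. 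Collecting these, the four terms become $S(x)a_1\cdot a_1+S(x)a_2\cdot a_2+2S(x)a_1\cdot a_2$, which is the first claimed equality.

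For the second equality I would start from $S=S^*+S^e$ and expand $S(x)a_1\cdot a_1+S(x)a_2\cdot a_2+2S(x)a_1\cdot a_2=S(x)(a_1+a_2)\cdot(a_1+a_2)$ using bilinearity, then split $S=S^*+S^e$ in this single bracket: the symmetric part gives $S^*(x)(a_1+a_2)\cdot(a_1+a_2)$, while the emisymmetric part gives $S^e(x)(a_1+a_2)\cdot(a_1+a_2)$, which reduces to $2S^e(x)a_1\cdot a_2$ because $S^e a_1\cdot a_1=S^e a_2\cdot a_2=0$ and $S^e a_2\cdot a_1=-S^e a_1\cdot a_2$ so only the cross terms survive and they add up rather than cancel. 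Alternatively one can invoke Lemma \ref{lemtech}(ii) directly, which identifies $2S^e(x)a_1\cdot a_2$ with $[g,f]\cdot\nabla u(x)$, but the purely algebraic route is cleaner here.

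This lemma is essentially bookkeeping, so I do not expect a genuine obstacle; the only point demanding minor care is the sign and symmetry manipulations for $S^e$ — in particular remembering that $S^e a\cdot a=0$ for every $a$ and that the cross term $2S^e(x)a_1\cdot a_2$ is the full emisymmetric contribution of the bracket $(a_1+a_2)$ rather than twice a half of it. Because $K$ was defined to be symmetric in (\ref{eqkdef}), no issue of well-definedness of the quadratic form arises, and the identity holds verbatim for all $x\in\R^n$ and all $a_1,a_2\in B_1(0)$ (indeed for all $a_1,a_2\in\R^m$, since nothing in the computation uses the norm bound).
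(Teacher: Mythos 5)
Your derivation of the first equality is correct: expanding the block form of $K$, using the symmetry of $S^*$ (so $S^*a_i\cdot a_i=Sa_i\cdot a_i$) and ${}^tS a_2\cdot a_1=Sa_1\cdot a_2$ gives exactly $Sa_1\cdot a_1+Sa_2\cdot a_2+2Sa_1\cdot a_2$. The derivation of the second equality, however, rests on two false intermediate identities that happen to cancel. First, the claim $Sa_1\cdot a_1+Sa_2\cdot a_2+2Sa_1\cdot a_2=S(a_1+a_2)\cdot(a_1+a_2)$ fails unless $S$ is symmetric: bilinearity gives $S(a_1+a_2)\cdot(a_1+a_2)=Sa_1\cdot a_1+Sa_2\cdot a_2+Sa_1\cdot a_2+Sa_2\cdot a_1$, and $Sa_2\cdot a_1\neq Sa_1\cdot a_2$ exactly when $S^e\neq0$ --- which is the case of interest here, since $S^e$ carries the Lie bracket information. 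The two sides differ by $Sa_1\cdot a_2-Sa_2\cdot a_1=2S^ea_1\cdot a_2$. Second, you assert $S^e(a_1+a_2)\cdot(a_1+a_2)=2S^ea_1\cdot a_2$ with the cross terms ``adding up rather than cancelling''; in fact $S^ea_2\cdot a_1=-S^ea_1\cdot a_2$, so the cross terms cancel and $S^ev\cdot v=0$ for every $v$, as you yourself note two lines earlier for the diagonal terms. The two errors are of equal magnitude and opposite sign, so your final formula is correct, but the argument as written is not.

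The repair is one line. Starting from the first equality, split $2Sa_1\cdot a_2=(Sa_1\cdot a_2+Sa_2\cdot a_1)+(Sa_1\cdot a_2-Sa_2\cdot a_1)$; the first bracket completes $Sa_1\cdot a_1+Sa_2\cdot a_2$ to $S(a_1+a_2)\cdot(a_1+a_2)$, which equals $S^*(a_1+a_2)\cdot(a_1+a_2)$ because the antisymmetric part contributes nothing to a quadratic form, and the second bracket equals $2S^ea_1\cdot a_2$ by the computation in Lemma \ref{lemtech}(ii). The paper offers no proof of this lemma (it is declared straightforward), and your overall strategy --- direct expansion of the block quadratic form --- is surely the intended one; only this bookkeeping step needs fixing. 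Your closing observation that the identity holds for all $a_1,a_2\in\R^m$, not just in the unit ball, is correct.
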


We can now rephrase Proposition \ref{prop2} for symmetric systems.
\begin{prop}\label{prop3}
Let $t>0$ and $\sigma:\R^n\to\R^{n\times m}$ be of class $C^1$. Let $f=\sigma a_1$, $g=\sigma a_2$, for $a_1,a_2\in B_1(0)$ and $u:\R^n\to\R$ be a function of class $C^2$. The trajectory (\ref{eqswitch}) satisfies
\begin{equation}\label{eqestsym}\begin{array}{ll}
u(x_{2t})-u(x_0)=\nabla u\cdot\sigma(x_0)(a_1+a_2)t\\+K(x_0)\left(\begin{array}{c}a_1\\a_2\end{array}\right)\cdot \left(\begin{array}{c}a_1\\a_2\end{array}\right)\frac{t^2}2+o(t^2).
\end{array}\end{equation}
If in particular the vector fields $f,g$ are orthogonal to $\nabla u(x_0)$ at $x_0$, then
\begin{equation}\label{eqestsym2}\begin{array}{ll}
u(x_{2t})-u(x_0)=K(x_0)\left(\begin{array}{c}a_1\\a_2\end{array}\right)\cdot \left(\begin{array}{c}a_1\\a_2\end{array}\right)\frac{t^2}2+o(t^2).
\end{array}\end{equation}
\end{prop}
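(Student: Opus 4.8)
The plan is to obtain (\ref{eqestsym}) by feeding the symmetric choice $f=\sigma a_1$, $g=\sigma a_2$ into the one-switch estimate of Proposition \ref{prop2} and then converting the resulting second order coefficient into the quadratic form $K(x_0)$ via the algebraic identities of Lemmas \ref{lemtech} and \ref{lemtech2}. Since $a_1,a_2$ are constant, $f$ and $g$ are $C^1$ vector fields and (\ref{eqswitch}) is exactly the trajectory treated in Proposition \ref{prop2}; hence I would start from
$$u(x_{2t})-u(x_0)=\nabla u\cdot(f+g)(x_0)\,t+\bigl(H_{f+g,f+g}(x_0)+\nabla u\cdot[f,g](x_0)\bigr)\tfrac{t^2}{2}+o(t^2).$$
Because $f+g=\sigma(a_1+a_2)$, the linear term is already $\nabla u(x_0)\cdot\sigma(x_0)(a_1+a_2)\,t$, which is the linear term in (\ref{eqestsym}).

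Next I would rewrite the bracketed coefficient. Lemma \ref{lemtech}(i), applied to the single field $f+g=\sigma(a_1+a_2)$, gives $H_{f+g,f+g}(x_0)=S(x_0)(a_1+a_2)\cdot(a_1+a_2)$, and since this is a quadratic form it is unchanged when $S$ is replaced by its symmetric part, that is, it equals $S^*(x_0)(a_1+a_2)\cdot(a_1+a_2)$. Lemma \ref{lemtech}(ii) identifies the Lie-bracket contribution $\nabla u\cdot[f,g](x_0)$ with $2S^e(x_0)a_1\cdot a_2$, up to the sign recording which of the two fields is followed first, which is precisely the orientation fixed in the block matrix (\ref{eqkdef}). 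Adding the two pieces, the second order coefficient becomes $S^*(x_0)(a_1+a_2)\cdot(a_1+a_2)+2S^e(x_0)a_1\cdot a_2$, and by Lemma \ref{lemtech2} this is exactly $K(x_0)\binom{a_1}{a_2}\cdot\binom{a_1}{a_2}$. Substituting back into the estimate proves (\ref{eqestsym}). An equivalent route is to start instead from the second form of the estimate in Proposition \ref{prop2}, the one with $H_{f,f}+H_{g,g}+2H_{g,f}$, apply Lemma \ref{lemtech}(i) to each hamiltonian, and recognise the sum through the first expression of Lemma \ref{lemtech2}; the two routes carry out the same bookkeeping in a different order. For the last statement it suffices to observe that ``$f$ and $g$ orthogonal to $\nabla u(x_0)$'' means $\nabla u(x_0)\cdot\sigma(x_0)a_1=\nabla u(x_0)\cdot\sigma(x_0)a_2=0$, so the linear term in (\ref{eqestsym}) vanishes and (\ref{eqestsym2}) is what remains.

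No genuine difficulty arises: the statement is a formal consequence of what precedes. The one point that deserves attention is the splitting of $S$ into symmetric and emisymmetric parts — that a quadratic form $Sv\cdot v$ only sees $S^*$, whereas the genuinely second order effect of a single switch, the Lie bracket $\nabla u\cdot[f,g]$, is carried entirely by $S^e$ and is sensitive to the order in which the two vector fields are used. Keeping this orientation consistent with the definition (\ref{eqkdef}) of $K$ is essentially the whole content of the argument.
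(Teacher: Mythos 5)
Your route is the intended one --- the paper gives no explicit proof of Proposition \ref{prop3}, treating it as exactly the combination of Proposition \ref{prop2} with Lemmas \ref{lemtech} and \ref{lemtech2} that you describe --- but the one step you yourself single out as ``essentially the whole content of the argument,'' namely the orientation of the emisymmetric part, is asserted rather than checked, and when checked it comes out the other way. Lemma \ref{lemtech}(ii) gives $\nabla u\cdot[f,g](x_0)=2S^e(x_0)a_2\cdot a_1=-2S^e(x_0)a_1\cdot a_2$, so the second order coefficient delivered by Proposition \ref{prop2} is
$$S^*(x_0)(a_1+a_2)\cdot(a_1+a_2)+2S^e(x_0)a_2\cdot a_1,$$
whereas Lemma \ref{lemtech2} and the definition (\ref{eqkdef}) give
$$K(x_0)\left(\begin{array}{c}a_1\\a_2\end{array}\right)\cdot\left(\begin{array}{c}a_1\\a_2\end{array}\right)=S^*(x_0)(a_1+a_2)\cdot(a_1+a_2)+2S^e(x_0)a_1\cdot a_2.$$
The two expressions differ by $4S^e(x_0)a_2\cdot a_1$, which vanishes only when the relevant Lie bracket is orthogonal to $\nabla u$. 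Equivalently, the coefficient your computation actually produces is $K(x_0)\binom{a_2}{a_1}\cdot\binom{a_2}{a_1}$: the identity (\ref{eqestsym}) holds with the roles of $a_1$ and $a_2$ interchanged inside the quadratic form (or, what is the same, with the off-diagonal blocks of $K$ transposed). You can see the mismatch even faster from the second form of (\ref{eqest}): the cross term there is $2H_{g,f}=2S(x_0)a_2\cdot a_1$, while $K(x_0)v\cdot v$ contains $2S(x_0)a_1\cdot a_2$, and these agree only when $S(x_0)$ is symmetric.

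To be fair, this defect originates in the paper's own statement rather than in your strategy, and it is immaterial for everything downstream: the constraint set $\{(a_1,a_2):|a_1|,|a_2|\leq1\}$ is invariant under swapping the two controls, so the maximum in Theorem \ref{thm1} and the spectral analysis of $K$ are unchanged --- only the labels of which control is applied on $[0,t)$ and which on $(t,2t]$ get exchanged. But a proof must either carry the sign through and state the conclusion with $\binom{a_2}{a_1}$, or explicitly record that the printed identity requires this relabelling; writing ``up to the sign \dots which is precisely the orientation fixed in (\ref{eqkdef})'' asserts exactly the equality that fails. Everything else in your argument --- the identification of the linear term, the replacement of $Sv\cdot v$ by $S^*v\cdot v$ in the quadratic form, and the passage to (\ref{eqestsym2}) when $\nabla u(x_0)\cdot\sigma(x_0)a_1=\nabla u(x_0)\cdot\sigma(x_0)a_2=0$ --- is correct.
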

Proposition \ref{prop3} shows that when starting out at a point $x_0$ where all of the vector fields are tangent, in order to reach the negative side of the hypersurface $\{x:u(x)-u(x_0)=0\}$ one needs to know that a $2m\times 2m$ matrix is not positive semidefinite. Putting things together we prove our first main result.

\begin{theorem}\label{thm1}
Let $\sigma:\R^n\to\R^{n\times m}$ be of class $C^2$ and let $u:\R^n\to\R$ be a function of class $C^3$. Let $\bar x\in\R^n$ be a point such that $\nabla u\;\sigma(\bar x)=0$ and suppose that
\begin{equation}\label{eqpde}
\begin{array}{ll}
\max_{a_1,a_2\in B_1(0)}\{ -{\mbox Tr}(D^2u\; \sigma(a_1+a_2)\otimes\sigma(a_1+a_2) (\bar x))\\
-(D(\sigma (a_1+a_2))\;\sigma (a_1+a_2)+[\sigma a_1,\sigma a_2])\cdot\nabla u(\bar x)\}>0.
\end{array}\end{equation}
Then the target $\{x:u(x)\leq u(\bar x)\}$ is STLA by the system (\ref{eqsys}) at $\bar x$.
\end{theorem}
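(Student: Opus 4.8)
The plan is to translate the algebraic condition \eqref{eqpde} into a trajectory that decreases $u$ at a definite rate, uniformly for initial points near $\bar x$, and then invoke the standard STLA mechanism. First I would observe that the quantity maximized in \eqref{eqpde} is exactly $-K(\bar x)\binom{a_1}{a_2}\cdot\binom{a_1}{a_2}$: using Lemma~\ref{lemtech}(i)--(iii) and Lemma~\ref{lemtech2}, the trace term equals $-{}^t\sigma\,D^2u\,\sigma(\bar x)(a_1+a_2)\cdot(a_1+a_2)$, the symmetric derivative term supplies the remaining part of $S^*(\bar x)(a_1+a_2)\cdot(a_1+a_2)$, and $[\sigma a_1,\sigma a_2]\cdot\nabla u(\bar x)=2S^e(\bar x)a_1\cdot a_2$. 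Hence \eqref{eqpde} says precisely that there exist $\bar a_1,\bar a_2\in B_1(0)$ with $K(\bar x)\binom{\bar a_1}{\bar a_2}\cdot\binom{\bar a_1}{\bar a_2}=:-c<0$; equivalently, $K(\bar x)$ is not positive semidefinite. (One may also note that by homogeneity the max is attained on the boundary, and that scaling $(a_1,a_2)$ by a common factor scales the value by its square, so we may assume the eigenvector-type maximizer has $|\bar a_i|\le 1$.)

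Next I would fix these $\bar a_1,\bar a_2$, set $f=\sigma\bar a_1$, $g=\sigma\bar a_2$, and apply Proposition~\ref{prop3}. Since $\nabla u\,\sigma(\bar x)=0$ we are close to the degenerate case, but for a nearby start $x_0$ the first-order term $\nabla u\cdot\sigma(x_0)(\bar a_1+\bar a_2)\,t$ is $O(|x_0-\bar x|)\,t$, not exactly zero; I would absorb this by taking $|x_0-\bar x|$ small compared to $t$. Concretely, from \eqref{eqestsym} and continuity of $K$, there are $t_0>0$ and a modulus $\omega$ with $\omega(0^+)=0$ such that for all $t\in(0,t_0]$ and all $x_0$,
$$u(x_{2t})-u(x_0)\le |\nabla u\cdot\sigma(x_0)(\bar a_1+\bar a_2)|\,t-\tfrac{c}{2}t^2+\omega(|x_0-\bar x|)t^2+|r(t,x_0)|,$$
where, because $\sigma\in C^2$ and $u\in C^3$, the remainder satisfies $|r(t,x_0)|\le C t^3$ uniformly for $x_0$ near $\bar x$ (this is the $C^2/C^3$ upgrade of the $o(t^2)$ noted in the Remarks). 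Shrinking $t_0$ kills the $Ct^3$ term against $\tfrac{c}{4}t^2$; then, given any target time $t$, choosing $\delta=\delta(t)$ so small that $|\nabla u\cdot\sigma(x_0)(\bar a_1+\bar a_2)|\le \tfrac{c}{8}t$ and $\omega(|x_0-\bar x|)\le \tfrac{c}{8}$ for $|x_0-\bar x|\le\delta$ yields $u(x_{2t})-u(x_0)\le -\tfrac{c}{16}t^2$, hence $u(x_{2t})\le u(\bar x)+|u(x_0)-u(\bar x)|-\tfrac{c}{16}t^2\le u(\bar x)-\delta'$ for a suitable $\delta'>0$, after possibly shrinking $\delta$ further using continuity of $u$. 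Relabeling $2t\rightsquigarrow t$ gives exactly the decrease demanded in the STLA definition, with a single-switch piecewise-constant control.

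Finally I would conclude by the standard argument: the estimate just obtained, valid for all small times, means $\bar x$ lies in the interior of the set of points reachable to $\mathbf T$ in arbitrarily short time, which is the definition of STLA at $\bar x$; this is the mechanism recalled in the Introduction (cf.\ \cite{so}, Theorem~IV.1.18 in \cite{bcd}). The main obstacle is the bookkeeping in the previous paragraph: one must order the smallness choices correctly — first $t_0$ to control the cubic remainder using only $K(\bar x)$, then $\delta$ depending on the prescribed $t$ to control both the leftover first-order term (which is genuinely present off $\bar x$) and the continuity modulus of $K$ — so that the negative $t^2$ term dominates uniformly; everything else is the identification of \eqref{eqpde} with non-positive-semidefiniteness of $K(\bar x)$ via the earlier lemmas, which is routine.
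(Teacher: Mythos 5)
Your proposal is correct and follows essentially the same route as the paper: identify \eqref{eqpde} with the existence of $a_1,a_2$ making the quadratic form of $K(\bar x)$ negative, run the single-switch trajectory of Proposition~\ref{prop3} from nearby starting points, and choose $\delta$ small relative to $t^2$ so that the negative second-order term dominates the residual first-order term (controlled by the Lipschitz constant of $\nabla u\cdot(f+g)$, which vanishes at $\bar x$) and the $Ct^3$ remainder. Your explicit modulus $\omega(|x_0-\bar x|)t^2$ for the continuity of $K$ is a slightly more careful rendering of a step the paper handles with a uniformity remark, but the argument is the same.
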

\begin{rem}
If (\ref{eqpde}) holds, then locally around $\bar x$ the inequality is preserved by continuity. Therefore (\ref{eqpde}) holds if and only if $u$ is a strict supersolution of the corresponding elliptic partial differential equation in a neighborhood of $\bar x$
\begin{equation}
\begin{array}{ll}
\max_{a_1,a_2\in B_1(0)}\{ -{\mbox Tr}(D^2u\; \sigma(a_1+a_2)\otimes\sigma(a_1+a_2) (x))\\
-[D(\sigma (a_1+a_2))\;\sigma (a_1+a_2)+[\sigma a_1,\sigma a_2])\cdot\nabla u(x)\}\\
\geq \rho>0,\quad x\in B_\rho(\bar x).
\end{array}\end{equation}
\end{rem}
\begin{proof}
Assume (\ref{eqpde}), clearly by previous discussion this amounts to
$$\max_{a_1,a_2\in B_1(0)}\left\{-K(\bar x)\left(\begin{array}{cc}a_1\\a_2\end{array}\right)\cdot 
\left(\begin{array}{cc}a_1\\a_2\end{array}\right)\right\}>0.$$
Then there are $a_1,a_2\in B_1(0)$ where the maximum is achieved. Let $f=\sigma a_1,\;g=\sigma a_2$, 
we follow the trajectory (\ref{eqswitch}) of the two vector fields such that $\nabla u\cdot f(\bar x)=0=\nabla u\cdot g(\bar x)$ starting out at $x_0=\bar x$. Then by (\ref{eqestsym2})
$$u(x_{2t})-u(\bar x)\leq-\rho\frac{t^2}2+Ct^3,$$
for $t$ positive and sufficiently small and for some constant $C$ estimating the remainder term. We want to keep $t\leq \rho/{4C}$ so that the right hand side remains strictly negative.

Fix any $0< t\leq \rho/{4C}$, we
now start the trajectory $x^1_\cdot$ in (\ref{eqswitch}) from a point $x_0=x^1$, $|x^1-\bar x|\leq\delta$. We obtain instead, by (\ref{eqestsym}),
\begin{equation}\label{eq2nd}
\begin{array}{ll}
u(x^1_{2t})-u(\bar x)\leq u(x^1_{2t})-u(x^1)+u(x^1)-u(\bar x)\\
\leq L_1\delta t-\rho \frac{t^2}2+Ct^3+L\delta\\
\leq \tilde L\delta-\rho \frac{t^2}2+Ct^3\leq  \tilde L\delta-\frac{t^2}2\frac\rho2,
\end{array}\end{equation}
where $L$ is a local Lipschitz constant for $u$, $L_1$ is a local Lipschitz constant for the product $\nabla u(x)\cdot (f(x)+g(x))$ which is zero at $\bar x$ and
$\tilde L=L+(L_1\rho)/(4C)$. The estimate is uniform on the starting point $x^1$ as well as the radius $\delta$ at least locally. If moreover we select, for $t$ sufficiently small,
$$\delta= { t}^2\frac{\rho}{4{\tilde L}}$$
then the right hand side of (\ref{eq2nd}) is zero. Therefore the trajectory (\ref{eqswitch}) starting at any $x^1$ will reach the target $\{x:u(x)\leq u(\bar x)\}$ earlier than $\bar t$. In particular we can estimate the minimum time to reach the target from any point $x^1$ in the neighborhood of $\bar x$ as
$$T(x^1)\leq 2\sqrt{\frac{\tilde L\delta}{\rho}},\quad |x^1-\bar x|\leq\delta,$$
namely with the square root of the distance from the center on the ball on the target.
Hence the system is STLA at $\bar x$.
\end{proof}

\begin{rem}
With a similar argument of the previous proof, if we know that 
\begin{equation}\label{eq1storder}
\nabla u\cdot f(x)\leq -\rho<0
\end{equation}
in the neighborhood of $\bar x$, and  $(x_t)_{t\geq 0}$ is the trajectory of the vector field $f$, we obtain an estimate of the form
$$u(x_t)-u(\bar x)\leq -\rho t+Ct^2,$$
for $t$ sufficiently small. Here the leading negative term has a first order power in $t$. If now $|x^1-\bar x|\leq\delta$ and we follow the trajectory of $f$ starting at $x^1$, call it $(x^1_t)_{t\geq0}$, then the estimate becomes
\begin{equation}\label{eq1st}
u(x^1_t)-u(\bar x)\leq -\rho t+Ct^2+L\delta,\end{equation}
where $L$ is a local Lipschitz constant for $u$. It follows from here that the minimum time to reach the target from $x_1$ can be estimated as
$$T(x_1)\leq\frac{2L}\rho\delta,$$
therefore with the distance from the center of the ball on the target. Thus the target is STLA by the system at $\bar x$.
\end{rem}
\begin{rem}
If the estimate (\ref{eq1storder}) holds at every point of the target, the one proves that the minimum time function is locally Lipschitz continuous in its domain by adapting the argument in \cite{so}.
If instead at every point of the boundary of the target it holds either (\ref{eq1storder}) or (\ref{eqpde}) then
from Theorem \ref{thm1} one can prove that the minimum time function is locally 1/2-H\"older continuous.
This fact outlines the difference between a first and a second order conditions.
\end{rem}

\section{Analysis of $K$}

In this section we will compute the minimum of the function ($K$ is as in (\ref{eqkdef}))
\begin{equation}\label{eqmin}
h(a_1,a_2)=K(\bar x)\left(\begin{array}{cc}a_1\\ a_2\end{array}\right)\cdot \left(\begin{array}{cc}a_1\\ a_2\end{array}\right),\quad |a_1|,|a_2|\leq 1,\end{equation}
and then characterize when it is strictly negative through the properties of $S(\bar x)$. This will require some linear algebra. A consequence will be that if at $\bar x$ all vector fields of the system are orthogonal to $\nabla u(\bar x)$, then (\ref{eqsys}) is STLA at $\bar x$. This will be our second main result.
The coordinates of the point where the minimun is attained will then provide the controls for two vector fields that allow us to cross the hypersurface with maximal rate of decrease. 
\begin{prop}
The minimum of (\ref{eqmin}) is non positive. If (\ref{eqmin}) attains a negative minimum, then it is reached at an eigenvector $v=(a_1,a_2)$ of $K(\bar x)$ with minimal eigenvalue $\lambda$ and we have $|a_1|=|a_2|=1$, $h(v)=2\lambda$.
\end{prop}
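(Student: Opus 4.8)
The plan is to convert the constrained minimization over the box $C=B_1(0)\times B_1(0)\subset\R^{2m}$ into a Rayleigh-quotient statement for the symmetric matrix $K(\bar x)$. The first assertion is immediate: $C$ is compact and $h$ is continuous, so the minimum is attained, and since $(0,0)\in C$ with $h(0,0)=0$ it is at most $0$.

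For the second assertion assume the minimum over $C$ is some $m<0$. First I would observe that this forces the least eigenvalue $\lambda$ of $K(\bar x)$ to be strictly negative: were $K(\bar x)$ positive semidefinite we would have $h(a_1,a_2)=K(\bar x)\binom{a_1}{a_2}\cdot\binom{a_1}{a_2}\ge 0$ everywhere, contradicting $m<0$. Next I would pick any eigenvector $v=(a_1,a_2)$ with $K(\bar x)v=\lambda v$ and show it can be rescaled to lie in $C$, namely with $|a_1|=|a_2|=1$. Using the block form \eqref{eqkdef} together with $S=S^*+S^e$ and ${}^tS=S^*-S^e$, the two rows of $K(\bar x)v=\lambda v$ read $S^*a_1+{}^tS\,a_2=\lambda a_1$ and $S\,a_1+S^*a_2=\lambda a_2$; subtracting them makes the $S^*$-contributions drop out and leaves
$$-S^e(\bar x)(a_1+a_2)=\lambda(a_1-a_2).$$
Taking the scalar product with $a_1+a_2$ and using that $S^e(\bar x)$ is emisymmetric (so $S^e(\bar x)w\cdot w=0$) together with $\lambda\ne 0$ yields $(a_1+a_2)\cdot(a_1-a_2)=|a_1|^2-|a_2|^2=0$. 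Since $v\ne 0$ both $a_1$ and $a_2$ are nonzero, so after rescaling we may assume $|a_1|=|a_2|=1$; then $v\in C$ and $h(v)=K(\bar x)v\cdot v=\lambda|v|^2=2\lambda$.

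It then remains to see that $2\lambda$ is indeed the minimum over $C$ and to characterize the minimizers. For every $w=(a_1,a_2)\in C$ one has $|w|^2=|a_1|^2+|a_2|^2\le 2$, hence by the Rayleigh bound for the symmetric matrix $K(\bar x)$ and $\lambda<0$,
$$h(w)=K(\bar x)w\cdot w\ge\lambda|w|^2\ge 2\lambda,$$
so $m=2\lambda$ and the vector $v$ above is a minimizer. Conversely, if $h(w)=2\lambda$ then both inequalities are equalities: $|w|^2=2$, which with $|a_1|,|a_2|\le 1$ forces $|a_1|=|a_2|=1$, and $K(\bar x)w\cdot w=\lambda|w|^2$ with $w\ne 0$ forces $w$ to lie in the eigenspace of the least eigenvalue $\lambda$; this is exactly the stated conclusion. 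The only nonroutine point, and the one I would be most careful about, is the block manipulation reducing $K(\bar x)v=\lambda v$ to $-S^e(\bar x)(a_1+a_2)=\lambda(a_1-a_2)$ and the subsequent use of emisymmetry to conclude $|a_1|=|a_2|$; everything else is standard compactness and Rayleigh-quotient reasoning.
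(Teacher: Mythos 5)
Your proof is correct and follows essentially the same route as the paper: both reduce the problem to the Rayleigh quotient of the symmetric matrix $K(\bar x)$ on the ball of radius $\sqrt2$ and then show that any eigenvector $(a_1,a_2)$ with nonzero eigenvalue satisfies $|a_1|=|a_2|$, so that the minimizing eigenvector of norm $\sqrt2$ actually lies in $B_1(0)\times B_1(0)$. The only (cosmetic) difference is in deriving $\lambda\left(|a_1|^2-|a_2|^2\right)=0$: you subtract the two block equations and use the emisymmetry of $S^e$, while the paper pairs the equations first with $(a_2,a_1)$ and then with $(a_1,a_2)$ --- the same computation in a different order.
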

\begin{proof}
It is clear that the minimum of $h$ in (\ref{eqmin}) is nonpositive, as by definition of $K$ in (\ref{eqkdef}), $h(a,-a)=0$ for all $a\in B_1(0)$, thus 0 is an eigenvalue of $K$. We will not write the dependence on $\bar x$ below. Also notice that the minimum of $h$ in the ball $B_{\sqrt{2}}(0)$, which contains the domain of $h$, is attained at an eigenvalue of norm $\sqrt{2}$ of the minimal eigenvalue.
We now show that if $(a_1,a_2)$ is an eigenvector of $K$ with non zero eigenvalue, then $|a_1|=|a_2|$. Therefore the minimum in (\ref{eqmin}) is also attained at an eigenvector of $K$ with norm $\sqrt{2}$ with minimal eigenvalue.

Let $(a_1,a_2)$ be an eigenvector of $K$ with $\lambda$ as an eigenvalue. Then it satisfies
\begin{equation}\label{eqsystem}
\left\{\begin{array}{ll}
S^*a_1+{}^tSa_2=\lambda a_1,\\
S^*a_2+Sa_1=\lambda a_2.
\end{array}\right.
\end{equation}
Multiply the first equation of (\ref{eqsystem}) by $a_2$ and the second by $a_1$. We obtain
\begin{equation}\label{eqsystem1}\left\{\begin{array}{ll}
S^*a_1\cdot a_2+{}^tSa_2\cdot a_2=\lambda a_1\cdot a_2,\\
S^*a_2\cdot a_1+Sa_1\cdot a_1=\lambda a_1\cdot a_2,
\end{array}\right.\end{equation}
and then
\begin{equation}\label{eqss}
Sa_1\cdot a_1=Sa_2\cdot a_2.\end{equation}
Now restart from (\ref{eqsystem}) and multiply the first equation by $a_1$ and the second by $a_2$. We obtain
\begin{equation}\label{eqsystem2}\left\{\begin{array}{ll}
S^*a_1\cdot a_1+{}^tSa_2\cdot a_1=\lambda |a_1|^2,\\
S^*a_2\cdot a_2+Sa_1\cdot a_2=\lambda |a_2|^2,
\end{array}\right.\end{equation}
and therefore by (\ref{eqss})
$$\lambda(|a_1|^2-|a_2|^2)=0,$$
which gives us the conclusion.
\end{proof}
In the next result we characterise when the minimum in (\ref{eqmin}) is negative by properties of $S$.
\begin{theorem}
Let $\sigma:\R^n\to\R^{n\times m}$ be of class $C^2$ and let $u:\R^n\to\R$ be a function of class $C^3$. Let $\bar x\in\R^n$ be a point such that $\nabla u\;\sigma(\bar x)=0$. 
Assume that either $S(\bar x)$ is not symmetric, or if it is, it has at least one negative eigenvalue. Then there are $a_1, a_2\in B_1(0)$ such that
$$K(x_0)\left(\begin{array}{cc}a_1\\a_2\end{array}\right)\cdot \left(\begin{array}{cc}a_1\\a_2\end{array}\right)<0.$$
In particular $K$ has a negative eigenvalue and the system (\ref{eqsys}) is STLA at $\bar x$.
\end{theorem}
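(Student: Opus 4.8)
The plan is to exhibit, in each of the two situations permitted by the hypothesis, an explicit pair $(a_1,a_2)\in B_1(0)\times B_1(0)$ at which the quadratic form
$$h(a_1,a_2):=K(\bar x)\binom{a_1}{a_2}\cdot\binom{a_1}{a_2}$$
is strictly negative, and then to invoke Theorem \ref{thm1}. Throughout I would start from the decomposition of Lemma \ref{lemtech2}, $h(a_1,a_2)=S^*(\bar x)(a_1+a_2)\cdot(a_1+a_2)+2S^e(\bar x)a_1\cdot a_2$, which isolates the symmetric contribution (depending on $a_1+a_2$ only) from the antisymmetric one, recalling that $2S^e(\bar x)a_1\cdot a_2=\nabla u\cdot[\sigma a_2,\sigma a_1](\bar x)$ by Lemma \ref{lemtech}(ii). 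The easy case is $S(\bar x)$ symmetric with a negative eigenvalue $\lambda$: then $S^e(\bar x)=0$ and $h(a_1,a_2)=S(\bar x)(a_1+a_2)\cdot(a_1+a_2)$, so picking a unit $\lambda$-eigenvector $v$ and taking $a_1=a_2=v$ gives $h(v,v)=4\lambda<0$; this is just Proposition \ref{prop2} used with $f\equiv g=\sigma v$.

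The substantive case is $S(\bar x)$ not symmetric, i.e.\ $S^e(\bar x)\neq0$. The idea is that the antisymmetric term, being genuinely bilinear in two arguments, survives at first order in a small parameter while the symmetric term --- governed by $a_1+a_2$ --- can be driven to vanish at second order. First I would use the canonical form of a nonzero real antisymmetric matrix (or a Lagrange-multiplier argument at a maximizer of $(x,y)\mapsto S^e(\bar x)x\cdot y$ over the product of unit spheres, which forces the two maximizing vectors to be orthogonal) to obtain orthonormal $e_1,e_2\in\R^m$ with $S^e(\bar x)e_1\cdot e_2=\mu\neq0$; after possibly replacing $e_2$ by $-e_2$ assume $\mu>0$. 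Then for $t\in(0,1)$ put $a_1=te_1+\sqrt{1-t^2}\,e_2$ and $a_2=te_1-\sqrt{1-t^2}\,e_2$, which lie on the unit sphere of $\R^m$; since $a_1+a_2=2te_1$ and $S^e(\bar x)e_i\cdot e_i=0$, a short computation yields
$$h(a_1,a_2)=4t\bigl(t\,S^*(\bar x)e_1\cdot e_1-\sqrt{1-t^2}\,\mu\bigr),$$
whose bracket tends to $-\mu<0$ as $t\to0^+$, so $h(a_1,a_2)<0$ for all sufficiently small $t>0$.

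In either case we have found $(a_1,a_2)$ with $h(a_1,a_2)<0$, hence the symmetric matrix $K(\bar x)$ is not positive semidefinite and so has a negative eigenvalue, and $\max_{a_1,a_2\in B_1(0)}\{-K(\bar x)(a_1,a_2)\cdot(a_1,a_2)\}>0$; by Lemma \ref{lemtech} and the computation in the proof of Theorem \ref{thm1} this is precisely (\ref{eqpde}), and since $\sigma\in C^2$, $u\in C^3$ that theorem gives STLA of $\{u\leq u(\bar x)\}$ at $\bar x$. The one genuinely non-routine point is in the second case: one must be sure that the constraints $|a_1|,|a_2|\leq1$ still leave enough freedom to let the first-order (in $t$) antisymmetric term beat the second-order symmetric one --- which is exactly why I would write $a_1=p+q$, $a_2=p-q$ with $p\perp q$, $|p|^2+|q|^2\leq1$ and then let $p=te_1\to0$; everything else is the standard normal form for antisymmetric matrices together with an appeal to Theorem \ref{thm1}.
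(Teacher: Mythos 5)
Your proof is correct, and while the symmetric case coincides with the paper's, your treatment of the non-symmetric case is genuinely different. The paper works with the full matrix $S(\bar x)$: it takes a unit eigenvector $a_1$ of ${}^tSS$ with positive eigenvalue $\lambda^2$, sets $a_2=-Sa_1/\lambda$, and computes $h(a_1,a_2)=-2\lambda(1+a_1\cdot a_2)$ directly from $Sa_1\cdot a_1+Sa_2\cdot a_2+2Sa_1\cdot a_2$; this is negative unless $a_1=-a_2$, and the degenerate case is excluded by showing that if it occurred for every positive singular direction then $S$ would be orthogonally diagonalizable, hence symmetric. You instead exploit the other identity of Lemma \ref{lemtech2}, $h=S^*(a_1+a_2)\cdot(a_1+a_2)+2S^e a_1\cdot a_2$, pick orthonormal $e_1,e_2$ with $S^e e_1\cdot e_2=\mu>0$ (which exists whenever $S^e\neq0$, e.g.\ $e_2=S^e e_1/|S^e e_1|$), and let $a_1+a_2=2te_1\to0$ so that the antisymmetric term, of first order in $t$, beats the symmetric one, of second order; your formula $h=4t\bigl(tS^*e_1\cdot e_1-\sqrt{1-t^2}\,\mu\bigr)$ checks out. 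What the paper's route buys is a single explicit, $t$-free pair of controls tied to the singular value decomposition of $S$, with the quantitative value $-2\lambda(1+a_1\cdot a_2)$ --- consonant with the paper's emphasis on constructiveness and on the minimizing eigenvector of $K$; what your route buys is a cleaner conceptual separation between the curvature term $S^*$ and the Lie-bracket term $S^e$, and it avoids any degenerate-case analysis, at the price of producing a non-optimal pair depending on an unspecified ``sufficiently small'' $t$. Both arguments then conclude identically via Theorem \ref{thm1}.
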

\begin{proof}
1. We suppose first that $S(\bar x)$ is symmetric. In the rest of the proof we will drop the dependence of the matrices on $\bar x$. Then by Lemma \ref{lemtech2}
$$K\left(\begin{array}{cc}a_1\\a_2\end{array}\right)\cdot \left(\begin{array}{cc}a_1\\a_2\end{array}\right)=S(a_1+a_2)\cdot (a_1+a_2).$$
It is therefore clear that when $S$ is positive semidefinite then the minimum of (\ref{eqmin}) is zero. Otherwise it has an unit eigenvector $v$ with negative eigenvalue, and we can choose $a_1=a_2=v$ to reach our goal.

2. Suppose now that $S$ is not symmetric. In particular $S$ is not the null matrix. Consider the positive semidefinite matrix ${}^tSS$, it will have at least one positive eigenvalue $\lambda^2$ with corresponding unit eigenvector $a_1$. Thus 
$${}^tSSa_1=\lambda^2 a_1$$ 
and then
$$|Sa_1|^2=Sa_1\cdot Sa_1={}^tSSa_1\cdot a_1=\lambda^2 a_1\cdot a_1=\lambda^2,$$
so that $\lambda=|Sa_1|>0$. Just notice that if $\bar a$ is eigenvector of ${}^tSS$ with null eigenvalue, then the same argument shows that $S\bar a=0$.
Let 
$$a_2=-\frac{S(x_0)a_1}\lambda.$$
Now we obtain
$${}^tSa_2=-\lambda a_1,\quad Sa_1\cdot a_2=-\lambda$$
$$Sa_2\cdot a_2=-\lambda a_2\cdot a_1=Sa_1\cdot a_1.
$$
Thus we conclude that
$$K
\left(\begin{array}{cc}a_1\\ a_2\end{array}\right)\cdot \left(\begin{array}{cc}a_1\\ a_2\end{array}\right)=-2\lambda(1+a_1\cdot a_2).$$
We reach our conclusion provided $a_1\neq -a_2$. 
Let us analyse this critical case. By definition it then follows
$$Sa_1=\lambda a_1,\quad {}^tSa_1=\lambda a_1.
$$
Therefore if this critical case happens for all eigenvectors of ${}^tSS$ with positive eigenvalues, and we consider an orthonormal basis of eigenvectors of ${}^tSS$, this is also a family of eigenvectors for $S$ which can then be diagonalized by an orthogonal matrix and is thus symmectric, which was supposed not to be the case.

The conclusion now follows from Theorem \ref{thm1}.
\end{proof}

\section{Examples}

\begin{example}
In this example we want to show that our condition for second order attainability can be satisfied by a single vector field. Consider the system
\begin{equation}
\left\{
\begin{array}{ll}
\dot x_t=-ay_t, \\
\dot y_t=ax_t  \\
(x_0,y_0)\in\R^2.  
\end{array}
\right.
\end{equation}
Here $a\in [-1,1]$. Let $u(x,y)=y-1$, $\sigma(x,y)={}^t(-y,x)$. Around $(x_0,y_0)=(0,1)$ we want to reach the target $\{x:y\leq1\}$. Since $\nabla u(x,y)=(0,1)$ then $\nabla u(0,1)\sigma(0,1)=0$ and first order conditions do not apply. Instead we compute $S(x,y)=-y$, notice that it is scalar (symmetric) and negative for $y=1$. Indeed in this case
\begin{equation}\label{eqkk}
K=\left(\begin{array}{cc} -1\quad&-1\\-1&-1
\end{array}\right),\end{equation}
and $K$ has $(1,1)$ as an eigenvector of $-2$ as eigenvalue. Therefore the target is small time locally attainable at $(0,1)$. 
\end{example}


\begin{example}
(This example comes from \cite{ma2}). In $\R^2$, take $\sigma={}^t(0,1)$ and $u(x,y)=\frac{1-x^2+y^2}2$ so there is a unique vector field which is constant. However $\nabla u\cdot\sigma(x,y)=-y$, therefore we have first order attainability of the sublevel sets of $u$ unless $y=0$. At every point, in particular at $(1,0)$ we have $S=-1<0$ so there is second order attainability of $\{x:u(x,y)\leq0\}=\R^2\backslash B_1((0,0))$. Matrix $K$ is as in (\ref{eqkk}).
\end{example}

\begin{example}
Consider in $\R^2$ the system where
$$\sigma(x,y)=\left(
\begin{array}{cc}
y\quad & 0  \\
 0& 1
\end{array}
\right),\quad u(x,y)=\frac{x^2+y^2}2.
$$
Then $\nabla u\;\sigma (x,y)={}^t(xy\quad y)$ which vanishes at points where $y=0$. We impose $x\neq0$ otherwise the gradient vanishes and look for second order conditions. Computing $S$ at such points we get
$$S(x,0)=\left(
\begin{array}{cc}
0\quad & 0  \\
 x& 1
\end{array}\right).$$
Since $S$ is not symmetric we know that the system satisfies an attainability condition of second order.
\end{example}

\begin{example}(Heisenberg system)
In $\R^3$ consider the system where
$$\sigma(x,y,z)=\left(
\begin{array}{cc}
1\quad & 0  \\
 0& 1\\
 y&-x
\end{array}
\right),\quad u(x,y)=\frac{x^2+y^2+z^2}2.
$$
Then $\nabla u\;\sigma (x,y)={}^t(x+yz\quad y-xz)$ which vanishes at points where $x=y=0$, and we select $z\neq0$ because otherwise the gradient of $u$ vanishes. Computing $S$ at such points we get
$$S(0,0,z)=\left(
\begin{array}{cc}
1\quad & -z  \\
 z& 1
\end{array}\right).$$
which again is not symmetric and we know that the system satisfies an attainability condition of second order of the sublevel sets of $u$. In this case we computed the minimal eigenvalue at $z=1$ which has multiplicity 2
$$K(0,0,1)=\left(
\begin{array}{cccc}
1\quad & 0  \quad &1\quad &1\\
 0& 1&-1&1\\
1&-1&1&0\\
1&1&0&1
\end{array}
\right),
$$
and $\lambda_{min}=1-\sqrt{2}<0$. Eigenvectors providing the highest decrease rate of $u$ are $(-\frac{\sqrt{2}}2,\frac{\sqrt{2}}2,1,0),(-\frac{\sqrt{2}}2,-\frac{\sqrt{2}}2,0,1)$ and the vector space generated by them. Each of the two pairs of coordinates, e.g. $(-\frac{\sqrt{2}}2,\frac{\sqrt{2}}2),(1,0)$, give us controls to determine the two vector fields that we need to use to achieve attainability of teh sublevel sets of $u$ with maximal rate.
\end{example}

\begin{example}(Dubin's system)
In $\R^3$ take the system where
$$\sigma(x,y,z)=\left(
\begin{array}{cc}
\cos{z}\quad & 0  \\
 \sin{z}& 0\\
 0&1
\end{array}
\right),\quad u(x,y)=\frac{x^2+y^2+z^2}2.
$$
Therefore $\nabla u\;\sigma (x,y,z)={}^t(x\cos{z}+y\sin{z}\quad z)$ which vanishes at points where $x=z=0$, and we add $y\neq0$ because otherwise the gradient of $u$ vanishes. Computing $S$ at such points we get
$$S(0,y,0)=\left(
\begin{array}{cc}
1\quad & 0  \\
 y& 1
\end{array}\right).$$
which is not symmetric. Hence the sublevel sets of $u$ are STLA around $(0,y,0)$, $y\neq0$.
\end{example}

\section{CONCLUSIONS AND FUTURE WORKS}

We proposed a new sufficient second order condition for STLA of a smooth target by checking if a symmetric matrix has a negative eigenvalue. This condition is triggered if either there is transversal Lie bracket to the target or if the geometry of the target  allows attainability with a single vector field. The eigenvector contains the controls that we need to use to reach the target with at most one switch.

Possible future developments of this work point in different directions. The way our approach reads for symmetric systems is clean and simple. It is not so for general nonlinear systems in particular affine systems with nontrivial drift. We will explore this in detail elsewhere.
One may want to design a control globally to steer the system to a target in finite time or asymptotically. We will also cope with that elsewhere.

\section{ACKNOWLEDGMENTS}

The author wishes to thank Mauro Costantini for useful discussions on algebra of matrices.

\end{document}